\newtheorem{theorem}[equation]{Theorem}
\newtheorem{lemma}[equation]{Lemma}
\newtheorem{corollary}[equation]{Corollary}
\newtheorem{definition}{Definition} 
\newcommand{\norme}[1]{ \left\| #1 \right\|}
\newcommand{\rt}{\rightarrow}
\newcommand{\tvec}{\mathcal{T}_{\mathcal Q,r}} 
\newcommand{\ral}{\mathbb{R}}
\newcommand{\nint}{\displaystyle\int}
\newcommand{\midd}[1]{\left| #1 \right|}
\newcommand{\nat}{\mathbb{N}}
\newcommand{\zat}{\mathbb{Z}}
\title{A Characterization of Two Weight Inequalities for a Vector-Valued Operator}
\keywords{Two Weights, Vector-Valued}
\author[J. Scurry]{James Scurry}
\thanks{Research supported in part by NSF DMS Grant \# 1001098}
\address{ School of Mathematics, Georgia Institute of Technology, Atlanta GA 30332, USA}
\email {jscurry3@math.gatech.edu}
\begin{document}
\begin{abstract}
We give a characterization of the two-weight inequality for a simple
vector-valued operator. Special cases of our result have been
considered before in the form of the weighted Carleson embedding
theorem, the dyadic positive operators of Nazarov, Treil, and Volberg
\cite{NTV} in the square integrable case, and Lacey, Sawyer,
Uriarte-Tuero \cite{lsut} in the $L^p$ case. The main technique of
this paper is a Sawyer-style argument and the characterization is for
$1 < p < \infty$. 
\end{abstract} 
\maketitle
%
%
%
\section{Introduction}
\indent Our focus is on two weight inequalities. We study a simple vector-valued operator $\mathcal{T}_{\mathcal Q,r}$ defined by a sparse collection of cubes $\mathcal{Q}$ and an exponent $1 \le r < \infty$, namely we take
\begin{equation*}
\mathcal{T}_{\mathcal Q,r}f(x) = \left( \sum_{I \in \mathcal Q} |\langle f \rangle_I|^r \mathbf{1}_{I}(x) \right)^{\frac{1}{r}}
\end{equation*}
\noindent for $f \in L^1_{\rm{loc}}(\mathbb{R}^n)$. The aim of our efforts is to give a necessary and sufficient condition for the two weight inequality of $\mathcal{T}_{\mathcal Q,r}$ to hold when $1 < r <\infty$. The main result of this chapter may be formulated as follows:

\begin{theorem} \label{thm: t.mainch1}
Suppose $w$ and $\sigma$ are weights and $1 < r,p < \infty$ with $\mathcal Q$ a sparse collection of cubes. Then we have $\lVert \mathcal{T}_{\mathcal Q,r}(\cdot \sigma) \rVert_{L^p(\sigma) \rightarrow L^p(w)}$ if and only if there are $\mathcal L$ and $\mathcal L_{\ast}$ such that:
\begin{align}
\sup_{Q} \int_{Q} \mathcal{T}_{\mathcal Q,r}(\mathbf{1}_Q \sigma)(x)^p w &\le \mathcal L \sigma(Q) \label{testingo} \\
\sup_{\mathbf{a} } \sup_{Q} \int_{Q} \mathbf{U}_{\mathcal Q}(\mathbf{1}_{Q} \mathbf{a} w)(x)^{p'} \sigma &\le \mathcal L_{\ast} w(Q) \label{testingd}
\end{align}
where $\mathbf{U}_{\mathcal Q}$ is an appropriate `dual' operator (which we define later) and where the first supremum for $\mathbf{U}_{\mathcal Q}$ is taken over all sequences of functions $\mathbf{a}$ such that $\lVert \mathbf{a} \rVert_{\ell^r} = 1$.
\end{theorem}
\indent Special cases of our theorem have been considered before. Notably, when $p=r$ and $w = \sigma$ we obtain the weighted Carleson embedding theorem:
\begin{theorem}
[Weighted Carleson Embedding Theorem] \label{thm: t.carleson1}
Let $w$ be a weight on $\mathbb{R}^n$ and $\{ \tau_J \}_{J \in \mathcal D}$ a collection of nonnegative numbers. Then we have
\begin{align*}
\sup_{I} \frac{1}{w(I)} \sum_{J \subset I} \tau_J &\lesssim 1
\end{align*}
if and only if 
\begin{align}
\sup_{\substack{f \in L^p(w) \\ \lVert f \rVert_{L^p(w)} = 1}} \sum_{J \in \mathcal D} (\langle f \rangle^w_J)^p \tau_J &\lesssim 1.\label{e.carleson}  
\end{align}
\end{theorem}
\noindent Theorem \ref{thm: t.carleson1} is a fundamental result in two weight theory. For positive operators, the relationship between Theorem \ref{thm: t.carleson1} and the corresponding two weight inequality is very strong. The two weight inequality for the maximal function is equivalent to Theorem \ref{thm: t.carleson1} and the characterization of weighted inequalities for discrete positive operators can be reduced to Theorem \ref{thm: t.carleson1}, see \cite{treil}. The connection is less clear for operators without a positive kernel, but if $p=2$ then Theorem \ref{thm: t.carleson1} can be used to give the two weight inequality for the dyadic square function and Haar multipliers (see \cite{NTV}). Our Theorem \ref{thm: t.mainch1} generalizes Theorem \ref{thm: t.carleson1}, reducing to a special case of (\ref{e.carleson}) when $p=r$. \\
\indent Further, for $r=1$ and $p=2$, \cite{NTV} gave a characterization of the operator $\mathcal {T}_{\mathcal Q,r}$. This result was later extended to $p \neq 2$ by \cite{lsut}. A crucial difference between the two papers was that \cite{NTV} used a Bellman function technique while \cite{lsut} constructed a more flexible argument. We rely on the methods presented in \cite{lsut}, noting Theorem \ref{thm: t.mainch1} follows largely from their argument but not directly from their results. \\
\indent We mention the operators $\mathcal{T}_{\mathcal Q,r}$ have also received attention with respect to one weight inequalities. 
The arguments of \cite{cump} imply the following:
\begin{theorem} Let $\mathcal Q$ be a sparse collection of cubes with $1 < r, p < \infty$ and $w \in A_p$. Then we have 
\begin{align}
\lVert \mathcal T_{\mathcal Q,r} \rVert_{L^p(w) \rightarrow L^p(w)} &\lesssim [w]_{A_p}^{\max \left\{ \frac{1}{r}, \frac{1}{p-1} \right\}} \label{e.cump}.
\end{align}
\end{theorem}
\noindent
Using a decomposition theorem of A. Lerner in conjunction with (\ref{e.cump}) the authors of \cite{cump} were able to deduce sharp strong-type inequalities for the vector-valued maximal function and dyadic square function. Later, A. Lerner used a similar argument to extend the square function result to the intrinsic square function. Applying these type of arguments together with Theorem \ref{thm: t.mainch1} and Sawyer's theorem for the maximal function we obtain the following
\begin{corollary} \label{c.twtsq}
Suppose $w$ and $\sigma$ are two weights with $1 < p,r < \infty$. Assume 
the testing conditions (\ref{testingo}) and (\ref{testingd}) are satisfied with constants independent of the sparse collection $\mathcal Q$. Additionally, suppose $M(\cdot \sigma)$ satisfies
\begin{align*}
\int_{Q} M(\mathbf{1}_{Q} \sigma)(x)^p w &\lesssim \sigma(Q).
\end{align*}
Then $\mathbf{M}_{r}(\cdot \sigma)$ is bounded from $L^p(\sigma)$ to $L^p(w)$ and if $r=2$, $S(\cdot \sigma)$ is bounded from $L^p(\sigma)$ to $L^p(w)$.  
\end{corollary}
\indent The remainder of this chapter is structured as follows. In Section 2 we introduce certain definitions and theorems which will be useful for us. The subsequent section deals with several preliminary results and Section 4 contains the bulk of our argument for Theorem \ref{thm: t.mainch1}. 

\section{Initial Concepts}
Throughout the remainder of this chapter we assume $1 < r < \infty$. Recall, for $\mathcal Q$ a sparse collection of cubes and $\mathbf{g} = \left\{ g_I \right\}_{I \in \mathcal Q}$ a collection of measurable functions we set
\begin{align*}
\mathbf{U}_{\mathcal Q}(\mathbf{g})(x) &= \sum_{I \in \mathcal Q} \langle g_I \rangle_I \mathbf{1}_{I}(x).
\end{align*}
\noindent We also consider an operator $\mathbf{T}_{\mathcal Q,r}$ which allows us to overcome the non-linearity of $\mathcal{T}_{\mathcal Q, r}$:
\begin{definition}
Let $f \in L^1_{\rm{loc}}(\mathbb{R}^n)$ and $1 < r < \infty$. We set
\begin{align*}
\mathbf{T}_{\mathcal Q, r}(f)(x) &= \left\{ \langle f \rangle_I \mathbf{1}_{I}(x) \right\}_{I \in \mathcal Q}.
\end{align*}
\end{definition}
\noindent Then we have 
\begin{align*}
\nint_{\ral^n} \mathcal{T}_{ \mathcal{Q}, r}(f \sigma)(x)^p w  &= 
\nint_{\ral^n} \norme{\mathbf{T}_{\mathcal Q, r}(f\sigma)}_{\ell^{r}}^p w  \\ &=
\nint_{\ral^n} \langle \mathbf{T}_{\mathcal{Q}, r }(f \sigma), \mathbf{a} w  \rangle_{\ell^r} dx \\
&= \nint_{\ral^n} \langle f \sigma, \mathbf{U}_{\mathcal Q}(\mathbf{a} w) \rangle_{\ell^r} dx. 
\end{align*}  
\noindent Consequently, $\mathbf{U}_{\mathcal Q}$ can be loosely considered as the dual operator to $\mathcal{T}_{\mathcal Q,r}$. Further, we define certain restrictions of $\mathcal{T}_{\mathcal Q,r}$:
\begin{definition}
Suppose $\mathcal Q$ is a sparse collection of cubes and $1 < r < \infty$. For $Q \subset \mathbb{R}^n$, we have
\begin{align*}
\mathcal{T}_{\mathcal Q, r ,Q}^{\rm{in}}f(x) &=  \left( \sum_{ \substack{I \subseteq Q \\ I \in \mathcal{Q}}} | \langle f \rangle_I |^r \mathbf{1}_{I}(x) \right)^{\frac{1}{r}} , \\
\mathcal{T}_{\mathcal Q, r, Q}^{\rm{out}}(f)(x) &= \left(\sum_{\substack{Q \subset I \\ I \in \mathcal{Q}}} | \langle f \rangle_I|^r \mathbf{1}_{I}(x) \right)^{\frac{1}{r}}.
\end{align*} 
\end{definition}
\indent Now we consider a Whitney covering lemma whose statement we borrow from \cite{lsut} and the universal maximal estimate: 
\begin{lemma} \label{whitney} For each $k$ there exists a collection $\mathcal{Q}_k$ of disjoint cubes satisfying:
\begin{align} 
\Omega _k = \bigcup _{Q\in \mathcal Q_k} Q,  & & \textup{}
\\ \label{e.Whit}
Q^{(1)} \subset \Omega _k\,, \ Q^{(2)}\cap \Omega _k ^{c} \neq \emptyset, & & \textup{}
\\ \label{e.fo}
\sum _{Q\in \mathcal Q_k} \mathbf 1_{Q^{(1)}} \lesssim \mathbf 1_{\Omega _k} ,
& &\textup{}
\\ \label{e.crowd}
\sup _{Q\in \mathcal Q_k} {} {\sharp} \left\{Q'\in \mathcal Q_k \;:\; Q'\cap Q ^{ (1)}\neq \emptyset \right\} \lesssim 1 \,, & &  \textup{}
\\ \label{e.nested}
Q\in \mathcal Q_k\,,\ Q'\in \mathcal Q_l \,,\ Q\subsetneqq Q'\quad \textup{ } \quad 
k> l \,. & &\textup{}.
\end{align} 
\end{lemma}

\begin{theorem} \label{maximal}
Let $\mu$ be a weight and $1 < s \leq \infty$. For $g \in L^s(\omega)$, define
\[ M^{\mu}g(x) = \displaystyle\sup_{\substack{ Q \in \mathcal D \\ Q \ni x}} \langle \midd{g} \rangle^{\mu}_Q .\] 
Then $M^{\mu}:L^s(\mu) \rt L^s(\mu)$ is a bounded operator. 
\end{theorem}
\noindent The proofs of Lemma \ref{whitney} and Theorem \ref{maximal} are standard and we omit them, but relevant arguments can be found in \cite{lsut} and \cite{stein}. For the proof of Corollary \ref{c.twtsq}, we will also need two additional theorems. The first is a decomposition theorem from \cite{lerner2} by A. Lerner:
\begin{theorem} \label{thm: t.lernerch1}
Let $f \in L^1_{\rm{loc}}(\ral^n)$ and let $Q$ be a fixed cube. Then there exists a collection of dyadic cubes $\left\{ Q^k_j \right\}_{j,k \in \nat}$ such that
\begin{itemize}
\item[{\rm{(i.)}}] for each $k,j \in \nat$, we have $Q^k_j \subset Q$ 
\item[{\rm{(ii.)}}] for almost every $x \in Q$, 
\begin{equation*}
| f(x) - m_{f}(Q) | \leq 4 M^{\sharp}_{2^{-n-2};Q}f(x) + 4 \displaystyle\sum_k \displaystyle\sum_{j} \omega_{2^{-n-2}} (f;Q^k_j) \mathbf{1}_{Q^k_j}(x)
\end{equation*}
\item[{\rm{(iii.)}}] for fixed $k$, $Q^k_j \cap Q^k_i = \emptyset $ for $i \neq j$ 
\item[{\rm{(iv.)}}] letting $\Omega_k = \displaystyle\bigcup_{j} Q^k_j$, we have $| \Omega_k \cap Q^k_j| \leq 2^{-1} |Q^k_j|$ and $\Omega_{k+1} \subset \Omega_{k}$.
\end{itemize}  
\end{theorem}
The second theorem is a result of E. Sawyer from \cite{sawyerm}:
\begin{theorem} \label{sawyer}
Let $w$ and $v$ be weights with $1 < p < \infty$ and define $\sigma=v^{1-p'}$. Then $M$ is bounded from $L^{p}(v)$ to $L^{p}(w)$ if and only if
\begin{equation}\label{e.testingM}
\int_{Q} \left |M(\sigma 1_{Q})(x)\right |^{p} v \lesssim \sigma(Q) \quad \text{for all } \,Q \text{ cubes}.
\end{equation} 
\end{theorem}
We finish this section with a definition
\begin{definition}
Let $\{ \mathcal Q_k \}_{k \in \mathbb{Z}}$ be collections of cubes as in Lemma \ref{whitney} and $R$ a dyadic cube. Provided there exists $k$ such that $R \in \mathcal Q_k$, define $C(R) = \sup \{ k : R \in \mathcal Q_k \}$, $c(R) = \inf \{ k : R \in \mathcal Q_k \}$ and $D(R) = C(R) - c(R)$; otherwise let $c(R) = C(R) = D(R) = 0$. 
\end{definition}
\section{Preliminary Results}
Here we formulate and prove some results which will be used in the argument for Theorem \ref{thm: t.mainch1}. We begin with the following weak-type estimate:
\begin{lemma} \label{weaktype}
Assuming $(\ref{testingo})$ and $(\ref{testingd})$ hold, for $\mathbf{g} \in L^{p^{\prime}}_{\ell^{r^{\prime}}}(w)$ and $f \in L^p(\sigma)$, we have
\begin{align}
\norme{\mathbf{U}_{\mathcal Q} ( \mathbf{g} w )}_{L^{p^{\prime}, \infty} (\sigma)} &\lesssim \mathcal L_{\ast}^{\frac{1}{p}} \norme{ \mathbf{g} }_{L^{p^{\prime}}_{\ell^{r^{\prime}}}(w)}, \label{weakint} 
\\
\norme{\tvec( f \sigma)}_{L^{p, \infty}(w)} &\lesssim \mathcal L^{\frac{1}{p^{\prime}}} \norme{f}_{L^p(\sigma)}. \label{oint}
\end{align}
\end{lemma}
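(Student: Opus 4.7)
The plan is to prove both weak-type bounds via parallel duality arguments, each reducing the quasi-norm on the level-set side to a strong-type norm of the partner operator acting on a restricted datum, and then invoking the corresponding testing condition on the pieces of a Whitney decomposition.

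For \eqref{oint}, fix $\lambda > 0$ and set $\Omega_\lambda := \{\overline T(f\sigma) > \lambda\}$, which is open by lower semi-continuity. Using the pointwise identity $\overline T(f\sigma) = \inp{T(f\sigma)}{a_f}$ (since $a_f \in B_{(\ral^d,\ell^{q'}(\mathcal Q))}$ is the pointwise dual element) together with the $T$--$U$ adjointness $\int \inp{T(h\sigma)}{\phi}\, w = \int U(\{\phi_Q w\})\, h\sigma$, one computes
\[
\lambda\, w(\Omega_\lambda) \;\leq\; \int_{\Omega_\lambda} \overline T(f\sigma)\, w \;=\; \int U\bigl(\{a_{f,Q}\mathbf 1_{\Omega_\lambda} w\}_Q\bigr)\, f\sigma \;\leq\; \bigl\lVert U(\{a_{f,Q}\mathbf 1_{\Omega_\lambda} w\}_Q) \bigr\rVert_{L^{r'}(\sigma)}\, \lVert f\rVert_{L^r(\sigma)}.
\]
The task therefore reduces to the estimate $\lVert U(\{a_{f,Q}\mathbf 1_{\Omega_\lambda} w\})\rVert_{L^{r'}(\sigma)} \lesssim \mathcal L^{1/r'} w(\Omega_\lambda)^{1/p'}$, from which rearrangement delivers \eqref{oint}. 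I would prove this by applying Lemma \ref{Whitney} to write $\Omega_\lambda = \bigsqcup_{P\in\mathcal Q_\lambda} P$, noting that for each Whitney cube $P$ the sequence $\{a_{f,Q}\mathbf 1_P\}_Q = \{\mathbf 1_{P\cap Q}a_{f,Q}\}_Q$ (by the support property of $a_f$) is, up to normalization, an admissible test sequence for \eqref{testingd}, so that $\int_P U(\{a_{f,Q}\mathbf 1_P w\})^{r'} \sigma \leq \mathcal L\, w(P)^{r'/p'}$. Combining these local testing bounds with the bounded-overlap \eqref{e.fo} and the elementary inequality $\sum_P w(P)^{r'/p'} \lesssim w(\Omega_\lambda)^{r'/p'}$ (valid since $r'/p' \geq 1$ because $r \leq p$) will close the estimate.

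The estimate \eqref{weakint} is proved by the dual argument. With $\Omega_\lambda := \{U(\{g_Q w\}) > \lambda\}$, the $T$--$U$ duality, pointwise H\"older in $\ell^q$--$\ell^{q'}$, and H\"older in $L^p(w)$--$L^{p'}(w)$ yield
\[
\lambda\, \sigma(\Omega_\lambda) \;\leq\; \int \bigl\langle T(\mathbf 1_{\Omega_\lambda} \sigma), g\bigr\rangle w \;\leq\; \bigl\lVert \overline T(\mathbf 1_{\Omega_\lambda}\sigma)\bigr\rVert_{L^p(w)}\, \lVert g\rVert_{L^{p'}_{\ell^{q'}}(w)},
\]
reducing the task to $\lVert \overline T(\mathbf 1_{\Omega_\lambda}\sigma)\rVert_{L^p(w)} \lesssim \mathcal L_*^{1/p} \sigma(\Omega_\lambda)^{1/r}$. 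I would establish this by a Whitney decomposition, the subadditivity $\overline T(\mathbf 1_{\Omega_\lambda}\sigma) \leq \sum_P \overline T(\mathbf 1_P \sigma)$, the testing condition \eqref{testingo} applied to each $P$ (giving $\int_P \overline T(\mathbf 1_P \sigma)^p w \leq \mathcal L_* \sigma(P)^{p/r}$), and the Carleson-type summation $\sum_P \sigma(P)^{p/r} \lesssim \sigma(\Omega_\lambda)^{p/r}$ (again by $p/r \geq 1$).

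The main obstacle in both estimates is the control of \emph{ancestor tails}: when the operator is applied to data supported in a single Whitney cube $P$, the output $U(\{a_{f,Q}\mathbf 1_P w\})$ (resp.\ $\overline T(\mathbf 1_P \sigma)$) has support extending well beyond $P$ through the dyadic ancestors $P^{(j)}$ of $P$, whereas the testing condition only controls integration over $P$ itself. The standard resolution is to invoke the Whitney condition \eqref{e.Whit}, namely $P^{(2)} \cap \Omega_\lambda^c \neq \emptyset$, which furnishes a point where the operator's value is already bounded by $\lambda$; this provides a uniform estimate on every average over a cube containing $P$. Combined with the bounded overlap \eqref{e.fo} and the cross-level nesting \eqref{e.nested}, this permits the tail contributions to be absorbed into the main term and the full estimate to be assembled.
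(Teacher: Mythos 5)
Your strategy is genuinely different from the paper's, and there is a gap at exactly the point you flag as the main obstacle.

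Both of your weak-type arguments reduce, after duality and Whitney decomposition, to a \emph{global} estimate: you need $\norme{U(\{a_{f,Q}\mathbf 1_{\Omega_\lambda}w\})}_{L^{r'}(\sigma)}\lesssim \mathcal L^{1/r'}w(\Omega_\lambda)^{1/p'}$ for \eqref{oint}, and $\norme{\tvec(\mathbf 1_{\Omega_\lambda}\sigma)}_{L^p(w)}\lesssim\mathcal L_*^{1/p}\sigma(\Omega_\lambda)^{1/r}$ for \eqref{weakint}. But the hypotheses \eqref{testingo} and \eqref{testingd} only control the integrals \emph{over the cube $Q$}, not over all of $\ral^d$; the output of $U$ on data localized to a Whitney cube $P$ lives on every dyadic ancestor $Q\supsetneq P$, and the testing conditions say nothing about those tails. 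The paper's Lemma \ref{weakvec} is precisely the upgrade of the testing conditions from integration over $Q$ to integration over $\ral^d$, and that lemma is proved \emph{using} Lemma \ref{weaktype}. So the estimate you assert as the ``reduced task'' is not available without the lemma you are trying to prove -- the argument is circular.

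Your proposed repair -- invoke the Whitney condition $P^{(2)}\cap\Omega_\lambda^c\neq\emptyset$ to bound the ancestor tails -- also does not close the gap. That condition yields a point $z\notin\Omega_\lambda$ at which $\tvec(f\sigma)(z)\leq\lambda$, i.e.\ a bound on averages $\tau_Q\mathbb E_Q(f\sigma)$ for $Q\supset P^{(2)}$. But the ancestor tail of $U(\{a_{f,Q}\mathbf 1_Pw\})$ consists of terms $\tau_Q\mathbb E_Q(a_{f,Q}\mathbf 1_Pw)\mathbf 1_Q$ for $Q\supsetneq P$; these involve $a_{f,Q}$ and $w$ rather than $f$ and $\sigma$, and there is no pointwise relation between $\tvec(f\sigma)(z)$ and these quantities. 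The Whitney condition is the right tool for the maximum principle (Lemma \ref{max}) used later in the sufficiency proof, where it controls $\tvec^{\textup{out}}$ applied to $f\sigma$; it does not transfer to control the tail of the adjoint $U$ applied to unrelated localized data.

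The paper sidesteps the entire issue by a good-$\lambda$ argument with two level sets $\Gamma_\alpha\supset\Gamma_{2\alpha}$. There the key estimate \eqref{need this} controls $\sum_j\bigl(\sigma(L^\alpha_j)^{-1}\int_{L^\alpha_j}U(\{g_Qw\})\sigma\bigr)^{r'}\sigma(L^\alpha_j)$ and feeds into the absorption inequality $(2\alpha)^{r'}\sigma(\Gamma_{2\alpha})\lesssim 2^{-1}\norme{U(\{g_Qw\})}^{r'}_{L^{r',\infty}(\sigma)}+\mathcal L_*^{r'/p}\norme{\{g_Q\}}^{r'}_{L^{p'}_{\ell^{q'}}(w)}$; no global strong-type bound on $U$ is ever required, only the local testing estimate on a single Whitney cube. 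This two-level absorption is the mechanism your proposal is missing, and it is precisely what breaks the circularity. I would suggest replacing the single-$\lambda$ step by the paper's good-$\lambda$ split with the parameter $\epsilon=2^{-r'-1}$ (resp.\ $\epsilon=2^{-p-1}$), and carrying out the estimates locally over each Whitney piece rather than attempting a global $L^{r'}(\sigma)$ (resp.\ $L^p(w)$) bound.
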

A consequence of Lemma \ref{weaktype} is that we can make slight modifications to the testing conditions on $\tvec$ and $\mathbf{U}_{\mathcal Q}$:
\begin{lemma} \label{weakvec} For each $Q \in \mathcal D$ and for any positive $\mathbf{a} = \left\{a_I\right\}_{I \in \mathcal Q }$ satisfying $\sum_{I \in \mathcal Q} |a_I(x) |^r = 1$ for almost all $x \in \mathbb{R}^n$, we have 
\begin{align}
\nint_{\ral^n} \tvec (\mathbf{1}_Q \sigma)(x)^p w &\lesssim \mathcal{L} \sigma(Q),  \label{testt} \\
\nint_{\ral^n} \mathbf{U}_{\mathcal Q} \left( \mathbf{1}_{Q} \mathbf{a}  w  \right)(x)^{p^{\prime}} \sigma &\lesssim \mathcal{L}_{\ast} w(Q) . \label{testu}
\end{align}
\end{lemma}

Now we consider the following the lemma: 
\begin{lemma} \label{max}
Given collections of cubes $\left\{ \mathcal Q_k \right\}_{k \in \zat}$ as in Lemma \ref{whitney},  for each $k$ and $ Q\in \mathcal Q_k$ we have 
\begin{equation*}
 \max \left\{ 
  \mathcal{T}_{\mathcal Q, r, Q^{(1)}} ^{\textup{out}} ( \mathbf 1_{Q ^{(2)}} f \sigma ) (x)
 \,,\,  \mathcal{T}_{\mathcal Q,r}  (\mathbf 1_{(Q^{(2)}) ^{c}} f \sigma ) (x) 
\right\} \le 2 ^{k} 
\,, 
\end{equation*} with $x \in Q$. 
\end{lemma}
\noindent Further, Lemma \ref{max} also implies the following maximum principle
\begin{lemma} \label{l.max} For a given function $f \in L^1_{\rm{loc}}(\mathbb{R}^n)$, let $\Omega_k = \{ x \in \mathbb{R}^n: \mathcal{T}_{\mathcal Q,r}f(x) > 2^k \}$. Denote by $\mathcal Q_k$ the corresponding Whitney cubes for the $\Omega_k$ 
and for a given cube $Q$ let
\begin{equation*}
E_k(Q) = Q \cap (\Omega _{k+2} - \Omega _{k+3})\,, \qquad Q\in \mathcal Q_k \,. 
\end{equation*} Then for all $k$ and $x \in E_k(Q)$, we have 
\begin{equation*}
 2^k \leq \mathcal{T} ^{\textup{in}}_{\mathcal Q, r, Q^{(1)}} (\mathbf{1}_{Q^{(1)}} f)(x) . \end{equation*}
\end{lemma}

\subsection{Proof of Lemma \ref{weaktype}}
We will argue the case for $(\ref{weakint})$ first. 
Fix a sequence $\mathbf{g} \in L^{p^{\prime}}_{\ell^{r^{\prime}}}(w)$ and begin by defining $\Gamma_{\alpha} = \left\{ x : \mathbf{ U } ( \mathbf{g} w )(x) > \alpha\right\}$ for $\alpha > 0$. $\mathbf{U}_{\mathcal Q} ( \mathbf{g} w)(x)$ is
lower semi-continuous and so $\Gamma_{\alpha}$ is open. Similar to Lemma \ref{whitney}, we will perform a Whitney-style decomposition; specifically, for fixed $\alpha$, let $\left\{ L^{\alpha}_j \right\}_{j
\in \nat}$ be the dyadic cubes which are maximal with respect to the following two conditions: (i.) $L^{\alpha}_j \cap \Gamma_{2\alpha} \neq \emptyset$ and (ii.) $ L^{\alpha}_j \subset \Gamma_{\alpha}$ for all $ j \in \nat$.
First, we aim to put ourselves in a position to use the testing condition on $\tvec$; for fixed $j$,
\begin{align*}
\int _{L^{\alpha}_j} \mathbf{ U}_{\mathcal Q} (\mathbf{g} w)(x)  \sigma   
&= \nint _{L^{\alpha}_j} \langle \mathbf{1}_{L^{\alpha}_j} \sigma , \mathbf{U}_{\mathcal Q} ( \mathbf{g} w ) \rangle_{\ell^r} dx \\
&= \int _{L^{\alpha}_j} \langle  \mathbf{T}_{\mathcal Q,r}( \mathbf{1}_{L^{\alpha}_j} \sigma) , \mathbf{g} w \rangle_{\ell^r} dx \\
&\le \int_{L^{\alpha}_j} \tvec(\mathbf{1}_{L^{\alpha}_j} \sigma)(x) \lVert \mathbf{g} \rVert_{\ell^r} w.
\end{align*}
Now as a result, we have
\begin{align*}
\left( \sigma(L^{\alpha}_j)^{-1} \int _{L^{\alpha}_j} \mathbf{ U}_{\mathcal Q} (\mathbf{g} w)(x) \sigma \right)^{p'} 
&\le
\left( \sigma( L^{\alpha}_j)^{-1} \int _{L^{\alpha}_j} \tvec( \mathbf{1}_{L^{\alpha}_j} \sigma)(x) \norme{ \mathbf{g}}_{\ell^{r^{\prime}}}  w \right)^{p^{\prime}} 
\notag  \\
&\le 
\left(  \int_{L^{\alpha}_j} \norme{\mathbf{g}}_{\ell^{r^{\prime}}}^{p^{\prime}}  w  \right) 
\left( \int _{L^{\alpha}_j} \tvec( \mathbf{1}_{L^{\alpha}_j} \sigma)^p w \right)^{\frac{p^{\prime}}{p}}  \sigma(L^{\alpha}_j)^{- p^{\prime}}
 \notag \\  
&\lesssim \mathcal L^{\frac{p^{\prime}}{p}} \left(
 \int_{L^{\alpha}_j} \norme{\mathbf{g} }_{\ell^{r'}}^{p^{\prime}} w \right) \sigma(L^{\alpha}_j)^{\frac{p'}{p}-p'} \\
&=  \mathcal L^{\frac{p^{\prime}}{p}} \left(
 \int_{L^{\alpha}_j} \norme{\mathbf{g} }_{\ell^{r'}}^{p^{\prime}} w \right) \sigma(L^{\alpha}_j)^{-1} \notag .
\end{align*}
\noindent As a consequence,
\begin{equation*}
\left( \sigma(L^{\alpha}_j)^{-1} \int _{L^{\alpha}_j} \mathbf{ U}_{\mathcal Q} (\mathbf{g} w)(x) \sigma \right)^{p'} \sigma(L^{\alpha}_j)
\lesssim \mathcal L^{\frac{p^{\prime}}{p}} \left(
 \int_{L^{\alpha}_j} \norme{\mathbf{g} }_{\ell^{r'}}^{p^{\prime}} w \right)
\end{equation*} 
\noindent and summing over $j$ gives
\begin{align}
\sum_{j \in \mathbb{N}} \left( \sigma ( L^{\alpha}_j)^{-1} \int _{L^{\alpha}_j} \mathbf{ U} (\mathbf{g} w)(x)  \sigma \right)^{p^{\prime}} \sigma(L^{\alpha}_j) &\lesssim \mathcal L^{\frac{p^{\prime}}{p}} \norme{\mathbf{g} }_{L^{p^{\prime}}_{\ell^{r^{\prime}}}(w)}^{p^{\prime}}  \label{e.needthis}.
\end{align} 
\indent At this point we will appeal to a `good-lambda' trick. In particular, we fix $\alpha$ and $\epsilon = 2^{-p^{\prime} - 1}> 0$; further, we define $\mathcal E = \left\{ j : \sigma ( L^{\alpha}_j
\cap \Gamma_{2 \alpha} ) <
\epsilon \sigma(L^{\alpha}_j) \right\}$. So,
\begin{align*}
(2 \alpha)^{p^{\prime}} \sigma( \Gamma_{2\alpha} ) &\lesssim \epsilon (2 \alpha)^{p^{\prime}} \displaystyle\sum_{ j \in \mathcal E} \sigma(L^{\alpha}_j) + \epsilon^{-1} \displaystyle\sum_{j \not\in \mathcal E} (2 \alpha)^{p^{\prime}} \sigma(L^{\alpha}_j) \\ &\le 
 \epsilon (2 \alpha)^{p^{\prime}} \sum_{ j \in \mathcal E} \sigma(L^{\alpha}_j) +  \sum_{j \not\in \mathcal E} 2^{-1} (\alpha \sigma(L^{\alpha}_j) \sigma(L^{\alpha}_j)^{-1} )^{p^{\prime}} \sigma(L^{\alpha}_j) \\
&\leq 
 \epsilon (2 \alpha)^{p^{\prime}} \sum_{ j \in \mathcal E} \sigma(L^{\alpha}_j) + \sum_{j \not\in \mathcal E} 2^{-1} \left( \sigma (L^{\alpha}_j)^{-1} \int_{L^{\alpha}_j} \mathbf{U}_{\mathcal Q}( \mathbf{g} w)(x)  \sigma \right)^{p^{\prime}}  \sigma(L^{\alpha}_j )\\ &\lesssim 
\epsilon (2 \alpha )^{p^{\prime}} \sum_{j \in \mathcal E} \sigma(L^{\alpha}_j)
+ 2^{-1} \mathcal L^{\frac{p^{\prime}}{p}} \norme{\mathbf{g} }_{L^{p^{\prime}}_{\ell^{r^{\prime}}}(w)}^{p^{\prime}} \ 
\end{align*} where the final inequality follows from (\ref{e.needthis}). 
Hence
\begin{align*} (2 \alpha )^{p^{\prime}} \sigma( \Gamma_{2 \alpha} ) &\lesssim  2^{-1} (\alpha )^{p^{\prime}} \sigma( \Gamma_{\alpha}
) + 2^{-1} \mathcal L^{\frac{p^{\prime}}{p}} \norme{ \mathbf{g} }_{L^{p^{\prime}}_{\ell^{r^{\prime}}} (w)}^{p^{\prime}}
\\
&\le 2^{-1} \norme{ \mathbf{U}_{\mathcal Q}( \mathbf{g} w )}_{L^{p^{\prime}, \infty}(\sigma)}^{p^{\prime}} + 2^{-1} \mathcal L^{\frac{p^{\prime}}{p}} \norme{\mathbf{g} }_{L^{p^{\prime}}_{\ell^{r^{\prime}}}(w)}^{p^{\prime}}
\end{align*}
which gives $(\ref{weakint})$. \\
\indent Now we consider $(\ref{oint})$. The argument will be similar to that for $(\ref{weakint})$. Fix a positive function $f \in L^p(\sigma)$ and let $\Psi_{\alpha} = \left\{ x : \tvec (f \sigma)(x) > \alpha \right\}$ for $\alpha > 0$. Again, we perform a Whitney-style decomposition; explicitly, let $\left\{ P^{\alpha}_j \right\}_{j \in \nat}$ be the dyadic cubes which are maximal with respect to: (i.) $P^{\alpha}_j \cap \Psi_{2 \alpha} \neq \emptyset$ and (ii.) $P^{\alpha}_j \subset \Psi_{\alpha} $ for all $j \in \nat$. 
We define $\mathbf{a} = \mathbf{T}_{\mathcal Q,r}(f \sigma)^{r-1} (\tvec(f \sigma))^{-1}$ and attempt to place ourselves in a position where we may use the testing condition on $\mathbf{U}_{\mathcal Q}$; using duality as before, for each $j$ we see the expression
\begin{equation}
\left( w(P^{\alpha}_j)^{-1} \int_{P^{\alpha}_j} \tvec (f \sigma)(x) w \right)^{p} w(P^{\alpha}_j) 
\end{equation}
\noindent is equivalent to
\begin{equation}
\left( w(P^{\alpha}_j)^{-1} \int_{P^{\alpha}_j} \mathbf{U}_{\mathcal Q}( \mathbf{1}_{P^{\alpha}_j} w \mathbf{a} )(x) f(x) \sigma \right)^p w(P^{\alpha}_j). \label{testingu}
\end{equation}
\noindent Using H\"{o}lder's inequality,
\begin{align*}
(\ref{testingu}) &\le \left( \int_{P^{\alpha}_j} \mathbf{U}_{\mathcal Q} (\mathbf{ a} \mathbf{1}_{P^{\alpha}_j} w)(x) ^{p^{\prime}} \sigma \right)^{\frac{p}{p^{\prime}}} \left( \int_{P^{\alpha}_j} f(x)^p \sigma \right) w(P^{\alpha}_j)^{1 - p} \\
&\lesssim \mathcal L_{\ast}^{\frac{p}{p^{\prime}}} \left( \int_{P^{\alpha}_j} f(x)^p \sigma \right) 
\end{align*}
\noindent and summing gives
\begin{align*}
\sum_{j \in \mathbb{N}} \left( w(P^{\alpha}_j)^{-1} \int_{P^{\alpha}_j} \tvec (f \sigma)(x) w \right)^{p} w(P^{\alpha}_j)
&\lesssim \mathcal L_{\ast}^{\frac{p}{p^{\prime}}} \norme{f}^p_{L^p(\sigma)}.
\end{align*}
As before we use a `good-lambda' trick; we fix $\alpha$ and $\epsilon = 2^{-p-1}$. Further, define $\Upsilon = \left\{ j : w( P^{\alpha}_j \cap \Psi_{2 \alpha} ) < \epsilon w(P^{\alpha}_j) \right\}$. So 
\begin{align*}
(2 \alpha)^p w( \Psi_{2 \alpha}) &\lesssim \epsilon (2 \alpha)^p \sum_{j \in \Upsilon} w( P^{\alpha}_j) + \epsilon^{-1} \sum_{j \not\in \Upsilon} (2 \alpha)^p w(P^{\alpha}_j) \\
&\lesssim \epsilon (2 \alpha)^p \sum_{j \in \Upsilon} w(P^{\alpha}_j) + 2^{-1} \sum_{j \not\in \Upsilon} (\alpha w(P^{\alpha}_j )w(P^{\alpha}_j)^{-1} )^p w(P^{\alpha}_j) \\
&\lesssim \epsilon (2 \alpha)^p \sum_{j \in \Upsilon} w(P^{\alpha}_j) + 2^{-1} \sum_{j \not\in \Upsilon} \left( w (P^{\alpha}_j)^{-1} \int_{P^{\alpha}_j} \tvec (f \sigma)(x) w \right)^p w(P^{\alpha}_j) \\
&\lesssim \epsilon (2 \alpha)^p \sum_{j \in \Upsilon} w(P^{\alpha}_j) + 2^{-1} \mathcal L_{\ast}^{\frac{p}{p^{\prime}}} \norme{f}_{L^p(\sigma)}^p \\
&\leq
\epsilon (2 \alpha)^p w(\Psi_{\alpha}) + 2^{-1} \mathcal L_{\ast}^{\frac{p}{p^{\prime}}} \norme{f}_{L^p(\sigma)}^p.
\end{align*}
Now we have
\begin{align*}
(2 \alpha)^p w( \Psi_{2 \alpha}) &\lesssim 2^{-1} \alpha^p w( \Psi_{\alpha}) + 2^{-1} \mathcal L_{\ast}^{\frac{p}{p^{\prime}}} \norme{f}_{L^p(\sigma)}^p \\
&\le 2^{-1} \norme{\tvec{(f \sigma)}}_{L^{p,\infty}(w)}^{p} +  \mathcal L_{\ast} ^{\frac{p}{p^{\prime}}} \norme{f}_{L^p(\sigma)}^p  \end{align*}
and this gives $(\ref{oint})$.

\subsection{Proof of Lemma \ref{weakvec}}
First, we will show the case for $(\ref{testt})$. By (\ref{weakint}) and duality,
 we have for each $f \in L^{p, 1}(\sigma)$, 
\begin{eqnarray*}
\norme{\tvec (f \sigma)}_{L^p(w)} \lesssim \mathcal L^{\frac{1}{p}} \norme{f}_{L^{p,1}(\sigma)} .
\end{eqnarray*}
Since for any cube $Q$, $\mathbf{1}_{Q} \in L^{p,1}(\sigma)$ and 
$\norme{\mathbf{1}_{Q}}_{L^{p,1}(\sigma)} = \sigma(Q)^{\frac{1}{p}}$, we have
\[ \norme{ \tvec ({ \mathbf{1}_{Q} \sigma})}_{L^p(w)} \lesssim \mathcal L^{\frac{1}{p}} \sigma(Q)^{\frac{1}{p}} \] 
which gives the desired result. \\
\indent We conclude by verifying $(\ref{testu})$ holds. Consider, for $\mathbf{a} = \mathbf{T}_{\mathcal Q,r}(f \sigma) \mathcal{T}_{\mathcal Q,r}(f \sigma)^{-1}$ and $Q$ fixed,
\begin{align*}
\left(\int_{\ral^n} \mathbf{U}_{\mathcal Q} (\mathbf{1}_{Q} \mathbf{a} w )(x)^{p^{\prime}} \sigma \right)^{\frac{1}{p^{\prime}}} &= \int_{\ral^n} \mathbf{U}_{\mathcal Q} ( \mathbf{1}_{Q} \mathbf{a} w)(x) h(x) \sigma
\end{align*}
for some $h \in L^{p}(\sigma)$. Then using duality and H\"older's inequality in $\ell^r-\ell^{r^{\prime}}$ we have
\begin{align}
\int_{\ral^n} \mathbf{U}_{\mathcal Q} ( \mathbf{1}_{Q} \mathbf{a} w)(x) h(x) \sigma &= 
\int_{\ral^n} \langle \mathbf{1}_{Q} \mathbf{ a} w , \mathbf{T}_{\mathcal Q,r}(h \sigma) \rangle_{\ell^r} dx \notag \\
&\leq \int_{Q} \tvec(h \sigma)(x) w . \label{no} 
\end{align}
Recall, by $(\tvec(h \sigma)(x))^{\ast}$ and $(\mathbf{1}_{Q})(x))^{\ast}$, we mean the symmetric decreasing rearrangements of $\tvec(h \sigma)(x)$ and $\mathbf{1}_{Q}(x)$ with respect to $w$. We continue from $(\ref{no})$ by applying H\"older's inequality and using $(\ref{oint})$ to obtain
\begin{align}
(\ref{no}) 
&\le \nint_{\ral} (\tvec(h \sigma)(x))^{\ast} \left( \mathbf{1}_{Q}(x) \right)^{\ast} w \notag \\
&\le \norme{\tvec(h\sigma)}_{L^{p,\infty}(w)} w(Q)^{\frac{1}{p^{\prime}}} \notag \\
&\leq \norme{\tvec(\cdot \sigma)}_{L^{p}(\sigma) \rt L^{p, \infty}(w)}  w(Q)^{\frac{1}{p^{\prime}}} \notag \\
&\lesssim \mathcal L_{\ast}^{\frac{1}{p^{\prime}}}  w(Q)^{\frac{1}{p^{\prime}}} . \notag
\end{align}
The foregoing inequalities yield 
\begin{equation*} \int_{\ral^n} \mathbf{U}_{\mathcal Q}( \mathbf{1}_{Q} \mathbf{a} w)(x)^{p^{\prime}} \sigma \leq \mathcal L_{\ast} w(Q)^{\frac{p^{\prime}}{p^{\prime}}} \end{equation*}
and we are done.  

\subsection{Proof of Lemma \ref{max} and Lemma \ref{l.max}}
\subsubsection{Proof of Lemma \ref{max}}
By Lemma \ref{whitney}, there is 
 $ z\in Q^{(2)} \cap \Omega _{k} ^{c}$. Thus for $ x\in Q$ we have
\begin{equation*}
 \tvec  (\mathbf 1_{(Q^{(2)}) ^{c}} f \sigma ) (x) 
=   \mathcal{T}_{\mathcal Q,r,Q^{(1)}} ^{\textup{out}} (\mathbf 1_{(Q^{(2)}) ^{c}} f \sigma ) (x)  
\le  \tvec (f \sigma ) (z) \le 
2 ^{k}
\end{equation*}
and we are done.
\subsubsection{Proof of Lemma \ref{l.max}}
By Lemma \ref{max} and the sub-linearity of $\mathcal{T}_{\mathcal Q}$, we have for $ x\in E_k(Q)$
\begin{eqnarray*}
2 ^{k+2} - 2 ^{k+1} &\leq& \tvec  (f) (x) - 
  \mathcal{T}_{\mathcal Q, r,{Q^{(1)}}} ^{\textup{out}} ( \mathbf 1_{Q ^{(1)}} f) (x)
 -  \tvec  (\mathbf 1_{(Q^{(1)}) ^{c}} f \sigma ) (x) \\
 &\leq& 
 \mathcal{T}_{\mathcal Q, r,{Q^{(1)}}}  ^{\textup{in}}  (\mathbf 1_{Q^{(1)}} f) (x) .
\end{eqnarray*}
Noting $2^{k+2} - 2^{k+1} \ge 2^k$, we obtain $2^k \leq  \mathcal{T}_{\mathcal Q, r,{Q^{(1)}}} ^{\textup{in}}  (\mathbf 1_{Q^{(1)}} f) (x)$. 
\subsubsection{Proof of Corollary \ref{c.twtsq}}
The following lemma is known (see \cite{cump} and \cite{lernerlps}):
\begin{lemma}
Let $f \in L^1_{\rm{loc}}(\mathbb{R}^n)$ and $\mathbf{g}$ be a sequence of $\ell^r$ summable locally integrable functions. For $\mathbf{M}_r$ the vector-valued maximal function with exponent $r$ and $S$ the dyadic square function,
\begin{align*}
\omega_{\lambda}(Sf^2,Q_0) &\lesssim \lambda^{-1} \langle f \rangle_{\rho Q_0} ^2 \\
\omega_{\lambda}(\mathbf{M}_r(\mathbf{g})^r,Q_0) &\lesssim \lambda^{-1} \langle \lVert \mathbf{g} \rVert_{\ell^r} \rangle_{Q_0}^r.
\end{align*}
\end{lemma}
\noindent By Lerner's decomposition theorem, for each cube $Q_N$ there is an appropriate collection of sparse cubes $\mathcal Q_N$ and $\mathcal I_N$ such that
\begin{align*}
| Sf(x) - m_{Q_N} | &\lesssim M^{\sharp}(f)(x) + \mathcal {T}_{\mathcal I_N, \rho, r}(f)(x) \\
| \mathbf{M}_r(\mathbf{g})(x) - m_{Q_N} | &\lesssim M^{\sharp}(\lVert \mathbf{g} \rVert_{\ell^r})(x) + \mathcal {T}_{\mathcal Q_N, r}(\lVert \mathbf{g} \rVert_{\ell^r})(x). 
\end{align*}
\noindent Assuming Theorem \ref{thm: t.mainch1} and recalling Sawyer's two weight theorem for the maximal function, the conclusion of the corollary for $\mathbf{M}_r$ and the dyadic square function $S$ follow immediately. 
\subsection{Proof of Theorem \ref{thm: t.mainch1}: Necessity}
\indent Here we prove the necessity of the testing conditions. We suppose that $\tvec$ is a bounded operator. The necessity of $(\ref{testingo})$ is immediate by taking $f = \mathbf{1}_{Q}$ for an arbitrary cube, so we only need to verify the necessity of the conditions on $\mathbf{U}_{\mathcal Q}$. Fix a cube $Q$ and a sequence $\mathbf{a}$ such that $\lVert \mathbf{a} \rVert_{\ell^r} = 1$. Without loss of generality we assume $h$ and $\mathbf{a}$ are positive. Then,
\begin{align*} 
\left( \int_{Q} \mathbf{U}_{\mathcal Q} ( \mathbf{ a} \mathbf{1}_Q w)(x)^{p'} \sigma \right)^{\frac{1}{p'}} &=
\int_{Q} \mathbf{U}_{\mathcal Q}(\mathbf{a} \mathbf{1}_{Q} w)(x) h \sigma
\end{align*}
where $h$ is an appropriate function from $L^p(\sigma)$ satisfying $\norme{h}_{L^{p}(\sigma)} = 1$.
Now we use duality and apply H\"{o}lder's inequality in $\ell^{r}-\ell^{r^{\prime}}$ and obtain
\begin{align*}
\int_{Q} \mathbf{U}_{\mathcal Q}(\mathbf{a} \mathbf{1}_{Q} w)(x) h \sigma &=
\int_{\mathbb{R}^n} \langle \mathbf{T}_{\mathcal Q,r}(h \mathbf{1}_{Q} \sigma), \mathbf{a} \mathbf{1}_{Q} w \rangle_{\ell^r} dx \\ 
&\leq  
\int_{Q} \tvec ( h \mathbf{1}_{Q} \sigma)(x)  w  \\ &\leq 
\norme{\tvec( h \mathbf{1}_{Q} \sigma)}_{L^p(w)} w(Q)^{\frac{1}{p^{\prime}}} \\ & \leq \norme{\tvec(\cdot \sigma)}_{L^p(\sigma) \rt L^p(w)} w(Q)^{\frac{1}{p^{\prime}}}. 
\end{align*}
Hence,
\begin{equation*}
 \int_{Q}  \mathbf{U}_{\mathcal Q} ( \mathbf{ a} \mathbf{1}_{Q} w )(x)^{{p^{\prime}}} \sigma   \le \norme{\tvec(\cdot \sigma)}^{p^{\prime}}_{L^p(\sigma) \rt L^p(w)} w(Q)
\end{equation*}
where $\mathbf{a}$ is arbitrary. Taking supremums we have
\begin{align*}
\sup_{ \mathbf{a} } \sup_{ Q} w(Q)^{-1} \int_{Q} \mathbf{U}_{\mathcal Q} (  \mathbf{1}_{Q} \mathbf{a}  w )(x)^{p^{\prime}}\sigma
\le \norme{\tvec(\cdot \sigma)}^{p^{\prime}}_{L^p(\sigma) \rt L^p(w)} \label{testingsup}
\end{align*}
which gives the result.

\section{Proof of Theorem \ref{thm: t.mainch1}: Sufficiency} 
We apply Lemma \ref{whitney} to obtain a collection of cubes $\mathcal Q_k$ for each $k$ such that \begin{align*} \Omega_k &=  \left\{ x \in \mathbb{R}^n: \mathcal{T}_{\mathcal Q,r}(f\sigma)(x) > 2^k \right\} \\ &= \cup_{Q \in \mathcal Q_k} Q. \end{align*} For $Q \in \mathcal Q_k$, define $E_k(Q) = (\Omega_k \backslash \Omega_{k+2}) \cap Q$. Then we have the following:
\begin{align*}
\int_{\mathbb{R}^n}  \mathcal{T}_{\mathcal{Q},r}(f \sigma)(x)^p w &\lesssim
\sum_{k \in \mathbb{Z}} w(\left\{ x\in \mathbb{R}^n : \mathcal{T}_{\mathcal{Q},r}(f \sigma)(x) > 2^k  \right\}) 2^{kp} \\
&\lesssim \sum_{k \in \mathbb{Z}} \sum_{Q \in \mathcal Q_k} w(E_k(Q)) 2^{kp}.
\end{align*} 
By Lemma \ref{l.max}, 
\begin{align*}
w(E_k(Q)) 2^k &\lesssim \int_{E_k(Q)} \mathcal{T}_{\mathcal Q,r}(f \sigma \mathbf{1}_{Q^{(1)}})(x) w \\
&= \int_{Q^{(1)}} \mathbf{U}_{\mathcal Q}(\mathbf{a} \mathbf{1}_{E_k(Q)} w)(x) f(x) \sigma;
\end{align*}
we split the above integral into two pieces so that 
\begin{align*}
\int_{Q^{(1)}} \mathbf{U}_{\mathcal Q}(\mathbf{a} \mathbf{1}_{E_k(Q)} w)(x) f(x) \sigma &=
S_{1,k}(Q) + S_{2,k}(Q)
\end{align*}
with
\begin{align*}
S_{1,k}(Q) &=\int_{Q^{(1)} \backslash \Omega_{k+m}} \mathbf{U}_{\mathcal Q}(\mathbf{a} \mathbf{1}_{E_k(Q)} w)(x) f(x) \sigma \\
S_{2,k}(Q) &= \int_{Q^{(1)} \cap \Omega_{k+m}} \mathbf{U}_{\mathcal Q}(\mathbf{a} \mathbf{1}_{E_k(Q)})(x) f(x) \sigma. 
\end{align*}
For each $k$, we partition $\mathcal Q_k$ into two collections:
\begin{align*}
\mathcal Q_{1,k} &= \left\{ Q \in \mathcal Q_k :  w(E_k(Q)) \le \eta w(Q) \right\} \\
\mathcal Q_{2,k} &= \left\{ Q \in \mathcal Q_k:  w(E_k(Q)) > \eta w(Q) \right\} \\
\end{align*}
where $0 < \eta < 1$ is a fixed parameter that will be defined later in the proof; further divide $\mathcal Q_{2,k}$ into:
\begin{align*}
\mathcal Q^2_k &= \left\{ Q \in \mathcal Q_{2,k} : S_{2,k}(Q) \le S_{1,k}(Q) \right\} \\
\mathcal Q^3_k &= \left\{ Q \in \mathcal Q_{2,k} : S_{2,k}(Q) > S_{1,k}(Q) \right\}.
\end{align*}
The sum $\sum_{k \in \mathbb{Z}} \sum_{Q \in \mathcal Q_k} w(E_k(Q)) 2^{kp}$ is split into pieces corresponding to the collections above:
\begin{align*}
I_1 &= \sum_{k \in \mathbb{Z}} \sum_{Q \in \mathcal Q_{1,k}} w(E_k(Q)) 2^{kp} \\
I_2 &= \sum_{k \in \mathbb{Z}} \sum_{Q \in \mathcal Q^2_k} w(E_k(Q)) 2^{kp} \\
I_3 &= \sum_{k \in \mathbb{Z}} \sum_{Q \in \mathcal Q^3_k} w(E_k(Q)) 2^{kp}.
\end{align*} 
Trivially, we have 
\begin{align*}
\sum_{k \in \mathbb{Z}} \sum_{Q \in \mathcal Q_k} w(E_k(Q)) 2^{kp} &=
I_1 + I_2 + I_3
\end{align*}
so that it suffices to estimate each $I_j$. 
\subsection{Estimating $I_1$}
Consider,
\begin{align*}
I_1 &\lesssim \eta \sum_{k \in \mathbb{Z}} \sum_{Q \in \mathcal Q_k} \eta^{-1} w(Q) 2^{kp} \\
&\lesssim \eta \int_{\mathbb{R}^n} \mathcal T_{\mathcal Q, r}(f \sigma)(x)^p w;
\end{align*}
as $0 < \eta < 1$, we may absorb the term $I_1$ into $\lVert \mathcal T_{\mathcal Q, r} (f \sigma) \rVert_{L^p(\sigma)}$. 
\subsection{Estimating $I_2$}
Here, notice
\begin{align*}
\eta 2^k w(Q) &\le \int_{Q^{(1)}} \mathbf{U}_{\mathcal Q}(\mathbf{a} \mathbf{1}_{E_k(Q)}w)(x) f(x) \sigma \\
&\lesssim \int_{Q^{(1)} \backslash \Omega_{k+m}} \mathbf{U}_{\mathcal Q}(\mathbf{a} \mathbf{1}_{E_k(Q)}w)(x) f(x) \sigma  \\
&\le \left( \int_{Q^{(1)} \backslash \Omega_{k+m}} \mathbf{U}_{\mathcal Q}(\mathbf{a} \mathbf{1}_{Q} w)(x)^{p'} \sigma \right)^{\frac{1}{p'}} \left( \int_{Q^{(1)} \backslash \Omega_{k+m}} f(x)^p \sigma \right)^{\frac{1}{p}} \\
&\lesssim \mathcal L_{\ast}^{\frac{1}{p'}}
w(Q)^{\frac{1}{p'}} \left( \int_{Q^{(1)} \backslash \Omega_{k+m}} f(x)^p \sigma \right)^{\frac{1}{p}}
\end{align*}
so that for fixed $Q$ and $k$,
\begin{align*}
w(E_k(Q)) 2^{kp} &\lesssim \eta^{-p} w(E_k(Q)) \left( \int_{Q^{(1)} \backslash \Omega_{k+m}} \mathbf{U}_{\mathcal Q}(\mathbf{a} \mathbf{1}_{E_k(Q)} w)(x) f(x) \sigma \right)^p  \\
&\lesssim \eta^{-p} w(E_k(Q)) \mathcal{L}_{\ast}^{\frac{p}{p'}}
w(Q)  \int_{Q^{(1)} \backslash \Omega_{k+m}} f(x)^p \sigma  \\
&= \eta^{-p} \mathcal{L}_{\ast}^{\frac{p}{p'}} \frac{w(E_k(Q))}{w(Q)} \int_{Q^{(1)} \backslash \Omega_{k+m}} f(x)^p \sigma \\
&\le \eta^{-p}  \mathcal{L}_{\ast}^{\frac{p}{p'}} \int_{Q^{(1)} \backslash \Omega_{k+m}} f(x)^p \sigma.
\end{align*}
Summing, we have from (\ref{e.crowd})
\begin{equation*} 
\eta^{-p} \mathcal{L}_{\ast}^{\frac{p}{p'}} \sum_{k \in \mathbb{Z}} \sum_{ Q \in \mathcal Q^2_k} \int_{Q^{(1)} \backslash \Omega_{k+m} } f(x)^p \sigma 
\lesssim \eta^{-p} \mathcal{L}_{\ast}^{\frac{p}{p'}} \int_{\mathbb{R}^n} f(x)^p \sigma;
\end{equation*}
\noindent recalling
\begin{align*}
I_2 &= \sum_{k \in \mathbb{Z}} \sum_{Q \in \mathcal Q^2_k} w(E_k(Q)) 2^{kp} \\
&\lesssim \eta^{-p} \sum_{k \in \mathbb{Z}} \sum_{ Q \in \mathcal Q^2_k} \int_{Q^{(1)} \backslash \Omega_{k+m} } f(x)^p \sigma,
\end{align*}
\noindent implies the result.
\subsection{Estimating $I_3$}
Assume $N$ is some fixed positive integer and $0 \le n < m$; we split the remaining cubes into collections modulo $m$ and intend to show
\begin{align*}
\sum_{\substack{ k > -N \\ k \equiv n \mod{m}} } \sum_{Q \in \mathcal Q^3_k} w(E_k(Q)) 2^{kp} &\lesssim \int_{\mathbb{R}^n} f(x)^p \sigma
\end{align*}
with implied constants independent of $n$ and $N$. The monotone convergence theorem combined with summing over $n$ will yield
\begin{align*}
\sum_{Q \in \mathcal Q^3_k} w(E_k(Q)) 2^{kp} &\lesssim \int_{\mathbb{R}^n} f(x)^p \sigma
\end{align*}
\noindent To this end, we use a stopping time argument. Namely, set $\mathcal P(N,n,1)$ to be the collection of maximal cubes within $P_{N,n} = \cup_{\substack{j \equiv n \mod m \\ j \ge -N}} \cup_{Q \in \mathcal Q^3_j} Q$. For $j > 1$ define $\mathcal P(N,n,j)$ to be the collection of all cubes $I$ in $ P_{N,n}$ which satisfy the following:
\begin{itemize}
\item[(i.)] there is $I' \in \mathcal P(N,n,j-1)$ such that $I \subsetneq I'$ 
\item[(ii.)] $\langle f \rangle^{\sigma}_I > 2 \langle f \rangle^{\sigma}_{I'}$ 
\item[(iii.)] $I$ is maximal with respect to properties (i.) and (ii.)
\end{itemize} 
\noindent Denote by $\mathcal P(N,n) = \cup_{j=1}^{\infty} \mathcal P(N,n,j)$.
\\
\indent We define for $Q \in \mathcal Q^3_k$
\begin{align*}
\mathcal N(k,m,N,n,Q) &= \left\{ I \in \mathcal Q_{k+m}, k \equiv n \mod{m} : I \cap Q^{(1)} \neq \emptyset \right\} \\ \mathcal N(k,m,N,n) &= \cup_{\substack{Q \in \mathcal Q_{k} \\ k \equiv n \mod{m}}} \mathcal N(k,m,N,n,Q)
\end{align*} 
and note that $Q^{(1)} \cap \Omega_{k+m} = \cup_{I \in \mathcal N(k,m,N,n,Q)} I$. Further, for each $I \in \mathcal N(k,m,N,n)$ there is $I_{k,m,N,n} \in \mathcal Q_k$ such that $I \subset I_{k,m,N,n}$. Since $k \equiv n \mod{m}$ we have $I \in \mathcal P$ or $\Gamma(I) = \Gamma(I_{k,m,N,n})$; as a consequence, we may split the sum 
\begin{align*}
\int_{Q^{(1)} \cap \Omega_{k+m}} \mathbf{U}_{\mathcal Q}(\mathbf{a} \mathbf{1}_{E_k(Q)}w)(x) f(x) \sigma &=
\sum_{I \in \mathcal N(k,m,N,n,Q)} \int_{I} \mathbf{U}_{\mathcal Q}(\mathbf{a} \mathbf{1}_{E_k(Q)} w)(x) f(x) \sigma 
\end{align*}
into two pieces:
\begin{align*}
A_{1} (k,m,N,n,Q) &= \sum_{\substack{I \in \mathcal N(k,m,N,n,Q) \\ I \in \mathcal P(N,n)}} \int_{I} \mathbf{U}_{\mathcal Q}(\mathbf{a} \mathbf{1}_{E_k(Q)} w)(x) f(x) \sigma \\
A_{2}(k,m,N,n,Q) &= \sum_{\substack{I \in \mathcal N(k,m,N,n,Q) \\ \Gamma(I) = \Gamma(I_{k,m,N,n})}} \int_{I} \mathbf{U}_{\mathcal Q}(\mathbf{a} \mathbf{1}_{E_k(Q)} w)(x) f(x) \sigma.
\end{align*}
For the remainder of the proof, we will assume $k \equiv n \mod{m}$ and suppress the notational dependence on $N$ and $n$ (e.g. we will write $A_1(k,m,Q)$ for $A_1(k,m,N,n,Q)$). 
Continuing, from the defining properties of $\mathcal Q^3_k$,
\begin{align*}
2^k w(Q) &\lesssim \eta^{-1} \int_{Q^{(1)} \cap \Omega_{k+m}} \mathbf{U}_{\mathcal Q}(\mathbf{a} \mathbf{1}_{E_k(Q)} w)(x) f(x) \sigma \\
&\lesssim \eta^{-1} A_1(k,m,Q) + \eta^{-1} A_2(k,m,Q)
\end{align*}
so that
\begin{align*}
2^{kp} w(E_k(Q)) &\lesssim  \frac{w(E_k(Q))}{\eta^p w(Q)^p}  A_1(k,m,Q)^p + \frac{w(E_k(Q)) }{\eta^p w(Q)^p} A_2(k,m,Q)^p.
\end{align*}
Recalling 
\begin{align*}
I_3 &= \sum_{k \in \mathbb{Z} } \sum_{Q \in \mathcal Q^3_k} w(E_k(Q)) 2^{kp} 
\end{align*}
we see it is enough to estimate
$I_{3,j} = \sum_{Q \in \mathcal Q^3_k} I_{3,j}(Q)$ for $j \in \left\{1,2\right\}$ and 
\begin{align*}
I_{3,1}(Q) &=  \frac{w(E_k(Q))}{w(Q)^p} A_1(k,m,Q)^p \\
I_{3,2}(Q) &= \frac{w(E_k(Q))}{w(Q)^p} A_2(k,m,Q)^p
\end{align*}
with $Q \in \mathcal Q^3_k$.
\subsubsection{Estimating $I_{3,1}$}
For a fixed cube $Q$ and $I \in \mathcal N(k,m,Q)$ we may write
\begin{align*}
\int_{I} \mathbf{U}_{\mathcal Q}(\mathbf{a} \mathbf{1}_{E_k(Q)} w)(x) f(x) \sigma &=
\int_{I} \mathbf{U}_{\mathcal Q}(\mathbf{a} \mathbf{1}_{E_k(Q)} w)(x) \langle f \rangle^{\sigma}_I \sigma
\end{align*}
since the 
expression $\mathbf{U}_{\mathcal Q}(\mathbf{a} \mathbf{1}_{E_k(Q)} w)(x)$ is constant for $x \in I$. Continuing, for $ G \in \mathcal P$,
\begin{align*}
\sum_{\substack{I \in \mathcal N(k,m,Q) \\ \Gamma(I) = \Gamma(I_{k,m})}} \int_{I} \mathbf{U}_{\mathcal Q}(\mathbf{a} \mathbf{1}_{E_k(Q)} w)(x) f(x) \sigma &=
\sum_{\substack{I \in \mathcal N(k,m,Q) \\ \Gamma(I) = \Gamma(I_{k,m})}} \int_{I} \mathbf{U}_{\mathcal Q}(\mathbf{a} \mathbf{1}_{E_k(Q)} w)(x) f(x) \sigma \\
&= \sum_{\substack{I \in \mathcal N(k,m,Q) \\ \Gamma(I) = \Gamma(I_{k,m})}} \int_{I} \mathbf{U}_{\mathcal Q}(\mathbf{a} \mathbf{1}_{E_k(Q)} w)(x) \langle f \rangle^{\sigma}_{I} \sigma \\
&\lesssim \langle f \rangle^{\sigma}_G \sum_{\substack{I \in \mathcal N(k,m,Q) \\ \Gamma(I) = \Gamma(I_{k,m})}} \int_{I} \mathbf{U}_{\mathcal Q}(\mathbf{a} \mathbf{1}_{E_k(Q)} w)(x) \sigma.
\end{align*} 
So for fixed $G \in \mathcal P$, using duality and H\"{o}lder's inequality we have
\begin{align*}
\frac{w(E_k(Q))}{w(Q)^p} A_1(k,m,Q)^p &\lesssim
\frac{w(E_k(Q))}{w(Q)^p} (\langle f \rangle^{\sigma}_{G} )^p \left( \sum_{\substack{I \in \mathcal N(k,m,Q) \\ \Gamma(I) = \Gamma(I_{k,m}) = G}} \int_{I} \mathbf{U}_{\mathcal Q}(\mathbf{a} \mathbf{1}_{E_k(Q)} w)(x) \sigma  \right)^p \\
&\le \frac{w(E_k(Q))}{w(Q)^p} (\langle f \rangle^{\sigma}_G )^p \left( \sum_{\substack{I \in \mathcal N(k,m,Q) \\ \Gamma(I) = \Gamma(I_{k,m}) = G}} \int_{G} \mathcal{T}_{\mathcal Q, r}(\mathbf{1}_{G} \sigma)(x) w \right)^p \\
&\le (\langle (f) \rangle^{\sigma}_{G})^p w(E_k(Q)) M_{w}(\mathcal{T}_{\mathcal Q, r}(\mathbf{1}_{G} \sigma ))(x)^p. 
\end{align*}
By the universal maximal estimate and the modified testing condition Lemma \ref{weakvec},
\begin{align*}
\sum_{k \in \mathbb{Z}} \sum_{Q \in \mathcal Q^3_k} w(E_k(Q)) M_{w}(\mathcal{T}_{\mathcal Q, r}(\mathbf{1}_{G} \sigma ))(x)^p
&\lesssim \int_{\mathbb{R}^n} M_{w}(\mathcal{T}_{\mathcal Q, r}(\mathbf{1}_{G} \sigma ))(x)^p w \\
&\lesssim \int_{\mathbb{R}^n} \mathcal{T}_{\mathcal Q,r}(\mathbf{1}_{G} \sigma)(x)^p w \\
&\lesssim \mathcal L \sigma(G).
\end{align*}
Hence,
\begin{align*}
\sum_{k \in \mathbb{Z}} \sum_{Q \in \mathcal Q^3_k} \frac{w(E_k(Q))}{w(Q)^p} \left( \sum_{\substack{I \in \mathcal N(k,m,Q) \\ \Gamma(I) = \Gamma(I_{k,m})}} \int_{I} \mathbf{U}_{\mathcal Q}(\mathbf{a} \mathbf{1}_{E_k(Q)} w)(x) f(x) \sigma \right)^p &\lesssim
\mathcal L\sum_{G \in \mathcal P} (\langle f \rangle^{\sigma}_G)^p \sigma(G) \\
&\lesssim \mathcal L \int_{\mathbb{R}^n} f(x)^p \sigma
\end{align*}
where in the last line we have used the Carleson embedding theorem.
\subsubsection{Estimating $I_{3,2}$}
We begin by noticing for fixed $Q$,
\begin{align*}
\frac{w(E_k(Q))}{w(Q)^p} A_2(k,m,Q)^p &=
\frac{w(E_k(Q))}{w(Q)^p} \Biggl( \sum_{\substack{I \in \mathcal N(k,m,Q) \\ I \in \mathcal P}} \frac{\sigma(I)^{\frac{1}{p}}}{\sigma(I)^{\frac{1}{p}}}  \int_{I} \mathbf{U}_{\mathcal Q}(\mathbf{a} \mathbf{1}_{E_k(Q)} w)(x) f(x) \sigma \Biggr)^p \\
&\le I_{4,1}(k,m,Q) I_{4,2}(k,m,Q)
\end{align*}  
where we define
\begin{align*}
I_{4,1}(k,m,Q) &= \frac{w(E_k(Q))}{w(Q)^p} \Biggl( \sum_{\substack{I \in \mathcal N(k,m,Q) \\ I \in \mathcal P}} \sigma(I)^{\frac{-p'}{p}} \left( \int_{I} \mathbf{U}_{\mathcal Q}(\mathbf{a} \mathbf{1}_{E_k(Q)} w)(x) \sigma \right)^{p'} \Biggr)^{\frac{p}{p'}} \\
I_{4,2}(k,m,Q)&=  \sum_{\substack{I \in \mathcal N(k,m,Q) \\ I \in \mathcal P}} \sigma(I) (\langle f \rangle^{\sigma}_G)^p .
\end{align*}
Notice for each $Q$ by H\"{o}lder's inequality,
\begin{align*}
\sigma(I)^{\frac{-p'}{p}} \left( \int_{I} \mathbf{U}_{\mathcal Q}(\mathbf{a} \mathbf{1}_{E_k(Q)} w)(x) \sigma \right)^{p'} 
&\le  \sigma(I)^{\frac{-p'}{p} + \frac{p'}{p}} \int_{I} \mathbf{U}_{\mathcal Q}(\mathbf{a} \mathbf{1}_{E_k(Q)} w)(x)^{p'} \sigma,
\end{align*}
so that
\begin{align*}
\sum_{\substack{I \in \mathcal N(k,m,Q) \\ I \in \mathcal P}} \sigma(I)^{\frac{-p'}{p}} \left( \int_{I} \mathbf{U}_{\mathcal Q}(\mathbf{a} \mathbf{1}_{E_k(Q)} w)(x) \sigma \right)^{p'} 
&\le  \sum_{\substack{I \in \mathcal N(k,m,Q) \\ I \in \mathcal P}} \int_{I} \mathbf{U}_{\mathcal Q}(\mathbf{a} \mathbf{1}_{E_k(Q)} w)(x)^{p'} \sigma \\
&\lesssim   \int_{\mathbb{R}^n} \mathbf{U}_{\mathcal Q}(\mathbf{a} \mathbf{1}_{E_k(Q)} w)(x)^{p'} \sigma  \\
&\lesssim \mathcal L_{\ast} w(Q);  
\end{align*}
since $\frac{w(E_k(Q))}{w(Q)^p} \le w(Q)^{1-p}$ we obtain 
$
I_{4,1}(k,m,Q) \lesssim \mathcal L_{\ast}^{\frac{p}{p'}} w(Q)^{\frac{p}{p'}-p+1} = \mathcal L_{\ast}^p;
$ as a result we need only consider the sum
\begin{align*}
\sum_{k \in \mathbb{Z}} \sum_{Q \in \mathcal Q^3_k} \sum_{\substack{I \in \mathcal N(k,m,Q) \\ I \in \mathcal P}} \sigma(R) (\langle f \rangle^{\sigma}_R)^p.
\end{align*}
To finish the proof, we need a uniform bound on the number of times a cube $R$ may appear in the above sum. Consider the following lemma, whose proof we momentarily postpone. 
\begin{lemma} \label{count}
Fix a cube $ R$ which satisfies $R \in \mathcal Q_j$ for some integer $j$, and for $ 1\le l  \le D(R) $ suppose 
\begin{itemize}
\item[\rm{(i.)}] there is an integer $ k_l$ and  $ Q _{l } \in \mathcal Q _{k_l } ^{3}$ with $ R\in \mathcal R_{k_l}  (Q)$, 
\item[\rm{(ii.)}] the pairs $ (Q _{l }, k_l )$ are distinct. 
\end{itemize}
We then have that $ D(R)  \lesssim 1  $, with the implied constant depending upon the dimension, and $ 
\eta $, the small constant previously mentioned. 
\end{lemma}
\noindent Using Lemma \ref{count} and the Carleson embedding theorem, we may estimate
\begin{align*}
\sum_{k \in \mathbb{Z}} \sum_{Q \in \mathcal Q^3_k} \sum_{\substack{R \in N(k,m,Q) \\ R \in \mathcal P}} \sigma(R) (\langle f \rangle^{\sigma}_R)^p &\lesssim \int_{\mathbb{R}^n} f(x)^p \sigma 
\end{align*}
to complete the proof modulo Lemma \ref{count}.
\subsubsection{Proof of Lemma \ref{count}} 
\indent Fix $R \in \mathcal D$ such that there exists 
$k_1,\cdots, k_{D(R)} \in \zat$ and cubes $Q_{1},\cdots,Q_{D(R)}$ so that $R \in \mathcal R_{k_j}(Q_{j})$ for all $1 \leq j \leq D(R)$ and the pairs $(Q_j, k_j)$ are distinct. We argue by contradiction that $D(R) \lesssim 1$. The dyadic structure of $\mathcal D$ immediately implies that by possibly reordering we must have the following
\begin{eqnarray}
Q_{1} \subseteq Q_{2} \subseteq \cdots \subseteq Q_{D(R)} \label{e.tower}.
\end{eqnarray}
Then, we have $R \subset Q^{(1)}_{j}$ for each $j$ by $(\ref{e.Whit})$. At this point we consider two cases; namely 
\begin{itemize}
\item[(a.)] $Q_{1} \subsetneq Q_{2} \subsetneq \cdots \subsetneq Q_{D(R)}$ 
\item[(b.)] $Q_{1} = \cdots = Q_{D(R)} $ .
\end{itemize}
\indent First we want to inspect case (a.). We may assume that $k_1 > \cdots > k_{D(R)}$ by $(\ref{e.Whit})$ (Whitney condition); also it is clear that case (a.) implies
\begin{eqnarray*} R \subset Q^{(1)}_{1} \subset \cdots \subset Q^{(1)}_{D(R)} . \end{eqnarray*}
Hence, by the above and the definition of $\mathcal R_{k_1}$ and $\mathcal R_{k_{D(R)}}$, $R \in \mathcal Q _{k_1 + 3}$ and $ R \in \mathcal Q_{k_{D(R)} +3 }$. We conclude $R \in \mathcal Q_{l} $ for $k_{D(R)} + 3 \leq l \leq k_1 +3$. 
Since we are assuming that $D(R) \lesssim 1$ fails, without loss of generality we may take $ D(R) = 7$. Then we have $R, Q_{7} \in Q_{k_7}$ : 
\begin{align*}
R &\subset Q^{(1)}_{1} \subsetneq \cdots \subsetneq Q^{(1)}_{7} \ \implies \\
R^{(2)} &\subset Q^{(1)}_{7}
\end{align*} 
and this contradicts $(\ref{e.Whit})$. Hence, there is a uniform bound on the number of strict inequalities in $(\ref{e.tower})$, and so we only need to consider (b.). \\
\indent If (b.) holds then by definition we have $w(E_{k_j}(Q_{1})) > \eta w(Q_{1})$ for all $1 \leq j \leq D(R)$. We can without loss of generality assume the $k_i$ are distinct. Then the $E_{k_j}(Q_1)$ are also distinct and 
\begin{eqnarray*}
w(Q_{1}) = \displaystyle\sum_{j \in \zat} w(E_{j}(Q_1)) \geq \displaystyle\sum_{j = 1}^{D(R)} w( E_{k_j}(Q_1)) > \displaystyle\sum_{j=1}^{D(R)} w(Q_1) \eta 
\end{eqnarray*} 
so that it must be $D(R) \leq \eta^{-1}$ and we are done.

The final portion extends our result to spaces of homogeneous type. 
\subsubsection{Acknowledgment} The author would like to thank Dr. Michael Lacey for introducing the problem as well as for his crucial discussions and numerous suggestions. Further, the author would also like to thank Dr. Brett Wick for his indispensable discussions concerning this paper, suggestions, and time. 

\begin{bibsection}
\begin{biblist}

\bib{cump}{article}{
   author={Cruz-Uribe, David},
   author={Martell, Jos{\'e} Mar{\'{\i}}a},
   author={P{\'e}rez, Carlos},
   title={Sharp weighted estimates for classical operators},
   journal={Adv. Math.},
   volume={229},
   date={2012},
   number={1},
   pages={408--441},
   issn={0001-8708},
   review={\MR{2854179}},
   doi={10.1016/j.aim.2011.08.013},
}


\bib{davidsemmes}{book}{
   author={David, Guy},
   author={Semmes, Stephen},
   title={Analysis of and on uniformly rectifiable sets},
   series={Mathematical Surveys and Monographs},
   volume={38},
   publisher={American Mathematical Society},
   place={Providence, RI},
   date={1993},
   pages={xii+356},
   isbn={0-8218-1537-7},
}


\bib{htv}{article}{
   author={Hukovic, S.},
   author={Treil, S.},
   author={Volberg, A.},
   title={The Bellman functions and sharp weighted inequalities for square
   functions},
   conference={
      title={Complex analysis, operators, and related topics},
   },
   book={
      series={Oper. Theory Adv. Appl.},
      volume={113},
      publisher={Birkh\"auser},
      place={Basel},
   },
   date={2000},
   pages={97--113},
   review={\MR{1771755 (2001j:42012)}},
}

\bib{lsut}{article}{
  author={Lacey, Michael T.},
    author={Sawyer, Eric T.},
      author={Uriarte-Tuero, Ignacio},
      title={Two Weight Inequalities for Discrete Positive Operators},
      date={2009},
      journal={Submitted},
      eprint={http://www.arXiv.org/abs/0911.3437},
      }



%


\bib{lerner33}{incollection}{
author={Lerner,A.},
title={A "Local Mean Oscillation" Decomposition and Some of Its Applications},
booktitle={Function Spaces, Approximation, Inequalities and Lineability, Lectures of the Spring School in Analysis},
publisher={Matfyzpress,Prague},
pages={71-106},
year={2011},
eprint={u.math.biu.ac.il/~lernera/paseky.pdf}
}
  \bib{lerner2}{article}{
  author={Lerner,A.},
  title={On an Estimate of Calder\'{o}n-Zygmund Operators by Dyadic Positive Operators},
  journal={submitted},
  date={2011},
  eprint={http://u.math.biu.ac.il/~lernera/dyadic.pdf}
  }
  \bib{lerners}{article}{
  author={Lerner,A.},
  title={On Some Sharp Weighted Norm Inequalities},
  journal={Journal of Functional Analysis},
  volume={232},
  pages={477-494},
  year={2006}
  }
  \bib{lernerlps}{article}{
  author={Lerner,A.},
  title={Sharp Weighted Norm Inequalities for Littlewood-Paley Operators and Singular Integrals},
  journal={Advances in Mathematics},
  volume={226},
  pages={3912-3926},
  year={2011}
  }

\bib{NTV}{article}{
   author={Nazarov, F.},
   author={Treil, S.},
   author={Volberg, A.},
   title={The Bellman functions and two-weight inequalities for Haar
   multipliers},
   journal={J. Amer. Math. Soc.},
   volume={12},
   date={1999},
   number={4},
   pages={909--928},
   issn={0894-0347},
}

\bib{sawyerm}{article}{
   author={Sawyer, Eric T.},
   title={A characterization of a two-weight norm inequality for maximal
   operators},
   journal={Studia Math.},
   volume={75},
   date={1982},
   number={1},
   pages={1--11},
   issn={0039-3223},
   review={\MR{676801 (84i:42032)}},
}

\bib{sawyerp}{article}{
   author={Sawyer, Eric T.},
   title={A characterization of two weight norm inequalities for fractional
   and Poisson integrals},
   journal={Trans. Amer. Math. Soc.},
   volume={308},
   date={1988},
   number={2},
   pages={533--545},
   issn={0002-9947},
   review={\MR{930072 (89d:26009)}},
}

\bib{stein}{book}{
   author={Stein, Elias M.},
   title={Harmonic analysis: real-variable methods, orthogonality, and
   oscillatory integrals},
   series={Princeton Mathematical Series},
   volume={43},
   note={With the assistance of Timothy S. Murphy;
   Monographs in Harmonic Analysis, III},
   publisher={Princeton University Press},
   place={Princeton, NJ},
   date={1993},
   pages={xiv+695},
   isbn={0-691-03216-5 },
   review={\MR{1232192 (95c:42002)}},
}

      \bib{treil}{article}{
      author={Treil, Sergei},
      title={A Remark on Two Weight Estimates for Positive Dyadic Operators},
      date={2012},
      eprint={http://arxiv.org/abs/1201.1455},
      }


\bib{wilson}{article}{
   author={Wilson, J. Michael},
   title={Weighted inequalities for the dyadic square function without
   dyadic $A_\infty$},
   journal={Duke Math. J.},
   volume={55},
   date={1987},
   number={1},
   pages={19--50},
   issn={0012-7094},
}


%
  
\end{biblist}
\end{bibsection}

\end{document}